\let\oldmarginpar\marginpar
\renewcommand\marginpar[1]{\oldmarginpar[\raggedleft\footnotesize #1]%
{\raggedright\footnotesize #1}}
\newcommand{\labelbp}[1]
\newtheorem{theorem}{Theorem}[section]
\newtheorem{corollary}[theorem]{Corollary}
\newtheorem{proposition}[theorem]{Proposition}
\newtheorem{conjecture}[theorem]{Conjecture}
\newtheorem{question}[theorem]{Question}
\newtheorem{define}[theorem]{Definition}
\theoremstyle{definition}
\newtheorem{remark}[theorem]{Remark}
\newtheorem{example}[theorem]{Example}
\newcommand{\ZZ}{{\mathbb{Z}}}
\newcommand{\NN}{{\mathbb{N}}}
\newcommand{\QQ}{{\mathbb{Q}}}
\newcommand{\bdy}{{\partial}}
\newcommand{\abs}[1]{{\left\vert #1 \right\vert}}
\newcommand\no[1]{}
\newtheorem*{namedtheorem}{\theoremname}
\newcommand{\theoremname}{testing}
\def\be { \begin{equation} }
\def\ee { \end{equation} }
\begin{document}

\title[]{ Remarks on Jones slopes and surfaces of knots}
\thanks{This research is partially supported by NSF grants DMS-1708249 and DMS-2004155} 
\author[]{Efstratia Kalfagianni}
\address{Department of Mathematics, Michigan State University, East Lansing, MI, 48824}
\email{kalfagia@math.msu.edu}

\begin{abstract} We show that the strong slope conjecture implies that the degrees of the colored Jones knot polynomials detect the figure eight knot.
Furthermore, we propose a characterization of alternating knots in terms of the Jones  period and the degree span of the colored Jones polynomial.
\end{abstract}

\thanks {\today}

\maketitle

\section{Introduction} 
The colored Jones polynomial of a knot $K\subset S^3$ is a collection 
of Laurent polynomials 
$ \{ J_K(n):=J_{K}(n, t)\}_{n=1}^\infty $ 
in a variable $t,$  such that  $J_{K}(1)=1$ and $J_{K}(2)$ is the classical Jones polynomial.
In this note  we use the normalization
$$J_{\text{unknot}}(n) = \frac{t^{n/2}- t^{-n/2}}{t^{1/2} -t^{-1/2}}.$$ 

Let $d_+[J_{K}(n)]$ and  $d_-[J_{K}(n)]$ denote  the maximal and minimal degrees of $J_{K}(n)$ in $t$, respectively.
These  degrees are  quadratic quasi-polynomials in $n$. 
The strong slope conjecture asserts that 
they  contain information about essential surfaces in knot exteriors. More specifically, the coefficients of the quadratic terms are {\em{boundary slopes}} of $K$ and the linear terms
encode information about the topology of essential surfaces that realize these boundary slopes.

In \cite{KL} we observed that the strong slope conjecture implies that   $d_+[J_{K}(n)]$ and  $d_-[J_{K}(n)]$ detect the unknot and in \cite{torus} we
show that they detect all the torus knots.
 In this note we show the following.
 
  \begin{theorem} \label{mainint}Let $K$ be knot that satisfies the strong slope conjecture. If the degrees  $d_+[J_{K}(n)]$ and  $d_-[J_{K}(n)]$
  are the same as these of the figure eight knot then $K$ is isotopic to the figure eight knot. 
   \end{theorem}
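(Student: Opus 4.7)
The plan is to apply the strong slope conjecture (SSC) to translate the hypothesis $d_\pm[J_K(n)] = d_\pm[J_{4_1}(n)]$ into topological data about essential surfaces in $S^3\setminus K$, and then to argue that this data identifies $K = 4_1$.

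I would first compute the figure eight degrees explicitly. Using Habiro's cyclotomic expansion $\hat{J}_{4_1}(n) = \sum_{k=0}^{n-1}\prod_{j=1}^{k} \{n-j\}\{n+j\}$, the highest-degree term comes from the $k = n-1$ summand, yielding $d_+[\hat{J}_{4_1}(n)] = n(n-1)$. Multiplying by $[n]$ (the paper's normalization) gives
\[
d_+[J_{4_1}(n)] \;=\; \tfrac{(n-1)(2n+1)}{2} \;=\; n^2 - \tfrac{1}{2}n - \tfrac{1}{2}, \qquad d_-[J_{4_1}(n)] \;=\; -\,d_+[J_{4_1}(n)].
\]
Hence the Jones period of $4_1$ is $1$, the Jones slopes are $js_\pm = \pm 4$, and the linear-term invariants predicted by SSC equal $1$. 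These match the two checkerboard once-punctured Klein bottles of the standard alternating diagram of $4_1$, which are essential surfaces of slopes $\pm 4$ with $\chi_-/|\partial| = 1$.

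Under the theorem's hypothesis and SSC, $K$ must similarly carry essential surfaces $S_\pm$ of boundary slopes $\pm 4$ with $\chi_-(S_\pm)/|\partial S_\pm| = 1$. Matching degrees at $n=2$ pins down the classical Jones span of $K$ to be $4$, so $c(K)\ge 4$ by Kauffman--Murasugi--Thistlethwaite, while the growth of $d_+ - d_-$ bounds the Seifert genus of $K$ from above. To finish, I would eliminate non-hyperbolic candidates (torus knots via \cite{torus}; nontrivial cables and satellites by the fact that their Jones slopes always contain a contribution from the cabling slope, inconsistent with the clean values $\pm 4$), and then argue that a hyperbolic knot carrying essential surfaces of slopes $\pm 4$ with minimal normalized Euler characteristic and Jones period one must be $4_1$, reducing to a finite enumeration in the low-crossing regime.

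The main obstacle lies precisely in this last step. The SSC data is not obviously sharp enough to pin down $4_1$ uniquely, and certain twist knots of Seifert genus one could a priori share the extracted slope and Euler-characteristic invariants. Closing the argument likely requires either exploiting further arithmetic information in the quasi-polynomials (for example the constant terms, or the exact value $1$ of the Jones period), or combining the surface data with a separate uniqueness result for boundary slopes of small hyperbolic knots, to force the identification $K = 4_1$.
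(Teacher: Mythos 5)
Your setup matches the paper's: the degree computation is correct ($d_{\pm}[J_{F_8}(n)]=\pm(n^2-\tfrac12 n-\tfrac12)$, Jones slopes $\pm4$, normalized Euler characteristic $-1$), and invoking the strong slope conjecture to produce essential surfaces $S,S^{*}$ with boundary slopes $4$ and $-4$ satisfying $\chi(S)=-\abs{\partial S}$ and $\chi(S^{*})=-\abs{\partial S^{*}}$ is exactly the paper's first step. But you stop short of the one idea that actually closes the argument, and you say so yourself. The condition $\chi=-\abs{\partial}$ forces every component of $S$ and $S^{*}$ to be a punctured torus or punctured Klein bottle (genus one, arbitrary number of punctures); after passing to the orientable double cover in the Klein bottle case, $M_K$ contains essential punctured tori with boundary slopes $4$ and $-4$, whose geometric intersection number on $\partial M_K$ is $i(4,-4)=8$. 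The key external input is then Gordon's theorem \cite[Theorem 1.1]{Gordon}: an irreducible $3$--manifold with toroidal boundary carrying essential punctured tori whose boundary slopes have distance $8$ is one of exactly two manifolds (the two lowest-volume one-cusped hyperbolic manifolds), and of these only the figure-eight complement is a knot complement in $S^3$. Gordon--Luecke then gives $K=F_8$.

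Your proposed substitute for this step does not work. The Kauffman--Murasugi--Thistlethwaite span argument gives only a \emph{lower} bound $c(K)\ge 4$, which cannot launch a finite enumeration; the claim that the growth of $d_+-d_-$ bounds the Seifert genus from above is not a known general fact; and there is no uniqueness result for ``hyperbolic knots with essential surfaces of slopes $\pm4$ and Jones period one'' that you could cite in place of Gordon's theorem. So the proposal is an honest partial argument with a genuine gap at precisely the point where the paper's proof does its real work: converting the Euler-characteristic constraint into the hypothesis of Gordon's punctured-torus distance bound.
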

  
 Theorem \ref{mainint} implies that  the degrees  $d_+[J_{K}(n)]$ and  $d_-[J_{K}(n)]$ detect the figure eight knot within the classes of knots for which
 the strong slope conjecture is known (e.g. the class of adequate knots). The proof of the theorem relies on Gordon's result \cite{Gordon} that gives bounds of the  distance between 
 boundary slopes of punctured tori in irreducible 3--manifolds with toroidal boundary.
 
 We also observe that results on the strong slope conjecture, together with a result of Howie \cite{howie},  suggest the following conjecture that proposes
 a characterization  of alternating knots in terms of their colored Jones polynomial.
 
\begin{conjecture} \label{characterizealtern} Given a knot $K$ let $p_K$  denote the Jones period of $K$.  Then, $K$ is alternating if and only if we have
\begin{equation}
p_K=1 \ \  {\rm and } \ \  2d_+[J_K(n)]- 2d_-[J_K(n)]=c n^2+ (2-c)n-2, 
  \label{eq:alternating}
 \end{equation}
 for some $c\in\ZZ$.
\end{conjecture}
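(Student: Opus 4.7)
The conjecture has two directions, which I would attack separately.

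\emph{Forward direction (alternating $\Rightarrow$ the formula).} I would use that a reduced alternating diagram $D$ of $K$ with $c = c(D)$ crossings is both $+$- and $-$-adequate. Standard state-sum computations (as in Kauffman, Lickorish--Thistlethwaite and their colored-Jones analogues) then give closed-form expressions for $d_\pm[J_K(n)]$ in terms of the state-graph data of $D$. These expressions come out as honest quadratic polynomials in $n$, so $p_K = 1$, and the required identity follows by direct substitution, using that for an adequate diagram one has $|s_A(D)| + |s_B(D)| = c+2$, where $s_A, s_B$ denote the all-$A$ and all-$B$ state graphs. This direction should be essentially a bookkeeping exercise once the relevant degree formulas are in place.

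\emph{Reverse direction (the formula $\Rightarrow$ alternating).} This is the deeper half. Assuming the strong slope conjecture for $K$, the quadratic coefficients of $d_\pm[J_K(n)]$ yield boundary slopes of essential surfaces $S_\pm \subset S^3\setminus N(K)$, and the linear coefficients encode the ratios $-\chi(S_\pm)/|\partial S_\pm|$ up to the number of sheets. The hypothesis $p_K = 1$ should allow one to take $S_\pm$ to be connected spanning surfaces, each meeting $\partial N(K)$ in a single curve of the respective slope. Substituting \eqref{eq:alternating} into the data furnished by the strong slope conjecture should then yield the two numerical relations $\chi(S_+) + \chi(S_-) = 2-c$ and $i(\partial S_+, \partial S_-) = 2c$, i.e.\ the equality $\chi(S_+) + \chi(S_-) + \tfrac12\, i(\partial S_+, \partial S_-) = 2$ appearing in Howie's characterization of alternating knots \cite{howie}. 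Invoking his theorem then concludes that $K$ is alternating.

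\emph{Main obstacle.} The principal difficulty is extracting from the Jones data a pair of spanning surfaces that satisfies Howie's equality on the nose. The strong slope conjecture guarantees essential surfaces, but these need not be spanning, need not be connected, and can occur with multiplicities that distort the Euler characteristics one would naively read off from the linear terms of $d_\pm[J_K(n)]$. One must leverage $p_K = 1$ and a careful accounting of those subleading terms to promote the Jones surfaces into Howie's spanning pair, while also ruling out exotic essential-surface configurations that happen to satisfy \eqref{eq:alternating} without arising from an alternating projection. Matching an equality rather than an inequality in Howie's criterion is what makes this step genuinely hard, and it will likely require both a sharpening of the slope-conjecture framework and further geometric input on the structure of Jones surfaces (for example, control over their fibration or quasi-fibered behavior, or on the topology of their complement).
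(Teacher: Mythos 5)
Your plan coincides with the paper's own (partial) treatment of this statement. The statement is a conjecture, and what the paper actually proves is precisely the conditional version you outline: the forward direction follows from the standard degree formulas for reduced alternating (adequate) diagrams, and Theorem \ref{alternating} proves the reverse direction for knots satisfying the strong slope conjecture by producing surfaces $S, S^{*}$ with $\chi(S)+\chi(S^{*})+\tfrac12\, i(\partial S,\partial S^{*})=2$ and invoking Howie's characterization. The obstacle you single out --- promoting the Jones surfaces to a pair of spanning surfaces, each with a single boundary component --- is exactly the additional hypothesis the paper has to impose (that every Jones slope is realized by a \emph{characteristic} Jones surface, i.e.\ one whose number of sheets divides $p_K=1$), and the paper leaves its validity open as Question \ref{lessthanp}; so you have correctly located the genuine gap, and neither you nor the paper closes it. One route your proposal misses: Proposition \ref{enough} gives an unconditional reduction that bypasses both Howie and the slope conjecture. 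If the degree-span identity \eqref{eq:alternating} holds with $c$ equal to the crossing number $c(K)$, then comparing the linear term against the Turaev-genus upper bound on $2d_+[J_K(n)]-2d_-[J_K(n)]$ forces $g_T(D)=0$ for a minimal diagram $D$, and Turaev genus zero characterizes alternating knots. Hence the whole conjecture also reduces to showing that the integer $c$ appearing in \eqref{eq:alternating} must be the crossing number --- a purely quantitative statement about degrees that may be more tractable than sharpening the geometry of Jones surfaces.
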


Alternating knots satisfy Equation (\ref{eq:alternating}) with $c=c(K)$, the crossing number of $K$.
Conversely, by \cite{Kaind}  if $K$ is knot that satisfies  Equation (\ref{eq:alternating}) with $c=c(K)$,  then $K$ must be alternating. See Proposition \ref{enough}. Conjecture \ref{characterizealtern}  is seeking to remove the knot diagrammatic reference
to crossing numbers and provide a characterization only in terms of properties of the degree  of $J_K(n)]$.  The conjecture is known to be true for all the knots for which the  strong slope conjecture holds. These include adequate knots, large classes of non-adequate Montesinos knots, graph knots, and knots obtained from these classes by certain satellite operations. See Section 2 for more details.

There are non-alternating knots with Jones period one. For instance, for any adequate knot $K$ we have $p_K=1$ but there exist also families of non-adequate knots
that have this property. On the other hand, alternating knots are the only knots with zero Turaev genus and they form a sub-class of adequate knots. The degree span of the colored Jones polynomial of adequate knots is known to satisfy an analogue of Equation (1) involving the Turaev genus. See  Equation (\ref{eq:adequate}) in Section 4.
We show that this generalized equation, however, does not  characterize adequate knots. See Proposition \ref{counter}.
\smallskip

{\bf Acknowledgement.} The author thanks Christine Lee for useful conversations and in particular for help with Proposition \ref{counter}.
This material is based on research supported by NSF grants DMS-1708249 and DMS-2004155 and by a grant from the Institute for Advanced Study School of Mathematics.
  
\smallskip

\section{Background}

\subsection{Slopes Conjectures}
Garoufalidis  \cite{ga-quasi} proved that
the degrees $ d_+[J_{K}(n)] $ and $ d_-[J_{K}(n)] $ are quadratic { quasi-polynomials:
Given a knot $K$, there is $n_K\in \NN$ such that  for all $n>n_K$ we have
 $$d_+[J_{K}(n)] =  a(n) n^2 + b(n) n  + c(n)\ \ \  {\rm and}  \ \ \   d_-[J_{K}(n)] =  a^{*}(n) n^2 + b^{*}(n) n  + c^{*}(n),$$
 where the coefficients are periodic functions from $\NN $ to $\QQ$. For a sequence $\{x_n\}$, let $\{x_n\}'$ denote the set of its cluster points. 

 \begin{define} {\rm  The  {\em Jones period} of $K$, denoted by $p_K$, 
 is the  least common multiple of the periods
 of these coefficient  functions $a(n), b(n), c(n)$.

 The elements of the sets 
$$js_K:= \left\{ 4n^{-2}d_+[J_K(n)]  \right\}' \quad
 \mbox{and} \quad js^*_K:= \left\{ 4n^{-2}d_-[J_K(n)] \right\}' $$
 are called {\em Jones slopes} of $K$. 
 
 Let  $\ell d_+[J_K(n)]$ and $\ell d_-[J_K(n)]$ denote the linear terms of $d_+[J_K(n)]$ and  $d_-[J_K(n)]$, respectively.
Now we set

$$jx_K:= \left\{ 2n^{-1}\ell d_+[J_K(n)]  \right\}'  \quad
 \mbox{and}\quad  jx^*_K:= \left\{ 2n^{-1}\ell d_-[J_K(n)]  \right\}'.$$}
 \end{define}

\smallskip

Given a knot $K\subset S^3,$ let
  $n(K)$ denote a tubular neighborhood of
$K$ and let $M_K:=\overline{ S^3\setminus n(K)}$ denote the exterior of
$K$. Let $\langle \mu, \lambda \rangle$ be the canonical
meridian--longitude basis of $H_1 (\bdy n(K))$.  
\begin{define} {\rm A properly embedded surface $(S, \bdy S) \subset (M_K,
\bdy n(K))$, is called  {\em essential }if it's $\pi_1$-injective and it is  not a boundary parallel annulus.
An element $\alpha/\beta \in
{\QQ}\cup \{ 1/0\}$, where $\alpha$ and $\beta$ are relatively prime integers,  is called a \emph{boundary slope} of $K$ if there
is an essential surface $(S, \bdy S) \subset (M_K,
\bdy n(K))$, such that each component of  $\bdy S$ represents $\alpha \mu + \beta \lambda \in
H_1 (\bdy n(K))$.} \end{define}

The longitude $\lambda$ of every knot bounds an essential orientable surface in the exterior of $K$.  Thus
 $0=0/1$ is a boundary slope of every knot in $S^3$.
Hatcher showed that every knot $K \subset S^3$
has finitely many boundary slopes \cite{hatcher}. 

For a surface  $(S, \bdy S) \subset (M_K,
\bdy n(K))$ we will use the notation $\abs{\partial S}$ to denote the number of boundary components of $S$.

Garoufalidis conjectured \cite[Conjecture 1.2]{ga-slope}, that the Jones slopes of any knot $K$ are 
 boundary slopes.
The following statement, which  is a refinement of the original conjecture, was 
stated by the author and Tran in \cite[Conjecture 1.6]{Effie-Anh-slope}.

\begin{conjecture}  {\rm ({Strong slope conjecture})} \label{SSC}
\begin{itemize}  
\item Given a Jones slope $a(n)=\alpha/\beta\in js_K$, with $\beta>0$ and $\gcd(\alpha, \beta)=1$, there is an essential surface $S$ in $M_K$ such that each component of $\partial S$ has slope $\alpha/\beta$  and
we have  $\displaystyle{ 2b(n)= { \frac{\chi(S)} {{\abs{\partial S} \beta} } } } \in  jx_K$.

\item  Given  a Jones slope $a^{*}(n)=\alpha^{*}/\beta^{*}\in  js^*_K$,  with $\beta^{*}>0$ and $\gcd(\alpha^{*}, \beta^{*})=1$,
there is an essential surface $S^{*}$ in $M_K$ such that each component of $\partial S^{*}$ has slope $\alpha^{*}/\beta^{*}$ and we have $\displaystyle{2b^{*}(n)=  - \frac{\chi(S^{*})}{{\abs{\partial S^{*}} \beta^{*}}}} \in  jx^*_K$.

\end{itemize}

\end{conjecture}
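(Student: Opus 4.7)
The plan is to construct, for each cluster point appearing in $js_K \cup jx_K$ or $js_K^* \cup jx_K^*$, an explicit essential surface in $M_K$ whose boundary slope and topology realize the identities in the conjecture. The approach uses the Kauffman bracket state-sum expansion of the colored Jones polynomial $J_K(n)$ to isolate the combinatorial data that controls $d_\pm[J_K(n)]$, and then promotes this combinatorial data to a concrete spanning surface in the knot exterior.

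First, I would fix a diagram $D$ of $K$ and expand $J_K(n)$ via the Jones--Wenzl idempotent inserted along the $n$-cable $D^{(n)}$, writing $d_+[J_K(n)]$ (resp.\ $d_-[J_K(n)]$) as a maximum (resp.\ minimum) over a finite family of extreme Kauffman states of $D^{(n)}$. The quasi-polynomial structure with period $p_K$ forces these extreme states to fall into finitely many ``types'', each indexed along an arithmetic progression in $n$ modulo $p_K$, each determining a coherent state graph in the plane, and each contributing the linear coefficient $b(n)$ (resp.\ $b^*(n)$) along its arithmetic progression. The goal is to attach a surface to each type and then identify the resulting slope and Euler data with the conjectured quantities.

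Second, for each type realizing a Jones slope $\alpha/\beta$, I would build the associated state surface $S$ by attaching a band at every crossing of $D$ according to the state's $A$/$B$-choices and capping off the resulting state circles with disks. A direct bookkeeping using the writhe of $D$ and the $A/B$-split at each crossing shows that $\bdy S$ has slope $\alpha/\beta$ on $\bdy n(K)$, and that $\chi(S) / (\abs{\bdy S} \beta)$, taken with the correct sign, equals $2b(n)$ at the top and $-2b^*(n)$ at the bottom. At this stage the identities in Conjecture \ref{SSC} hold at the level of the candidate surface $S$; what remains is essentiality.

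The main obstacle is proving that $S$, or a suitable replacement, is essential in $M_K$. When the relevant state is $A$-- or $B$--adequate, essentiality of the state surface is known by work of Ozawa and Futer--Kalfagianni--Purcell and yields the conjecture for adequate knots; related diagrammatic arguments handle Montesinos, graph, and certain satellite knots. In general the state surface can carry compressions, and one must either pass to the guts of $M_K \cut S$ or perform a controlled normal-surface reduction, then verify that this does not change the slope $\alpha/\beta$ nor the ratio $\chi / \abs{\bdy S}$. A complete proof beyond the classes recorded in Section 2 seems to require either a new geometric bridge between the degree quasi-polynomials and boundary slopes (for instance via the $A$-polynomial and the AJ conjecture) or a systematic guts-reduction procedure applicable to state surfaces of arbitrary diagrams; this is why Conjecture \ref{SSC} remains open in full generality.
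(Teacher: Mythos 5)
This statement is a conjecture, not a theorem: the paper offers no proof of it, and indeed your own write-up correctly ends by conceding that the statement ``remains open in full generality.'' So what you have produced is a strategy outline rather than a proof, and it should be judged as such. The paper's only treatment of Conjecture~\ref{SSC} is the list of families in Section~2 for which it has been verified by other authors (adequate knots, iterated torus knots and cables, graph knots, certain Montesinos and pretzel families, knots up to 9 crossings, and various satellite constructions), and the results of the paper (Theorems~\ref{mainint} and \ref{alternating}) are all \emph{conditional} on the conjecture. Your outline does track the philosophy behind the known cases --- extract extreme Kauffman states from the cabled state sum, build the associated state surface, and invoke essentiality results of Ozawa and Futer--Kalfagianni--Purcell --- so as a description of the existing evidence it is accurate.

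However, you flag only one gap (essentiality of the candidate surface) when there are really two. The earlier step, where you assert that ``a direct bookkeeping'' shows the extreme states realize $d_\pm[J_K(n)]$ and that the state surface's slope and Euler characteristic match the coefficients $a(n)$, $b(n)$, is itself unproven in general: for non-adequate diagrams the extreme Kauffman states can cancel in the state sum, so the degree of $J_K(n)$ can be strictly smaller than the naive maximum over states, and the Jones slope need not be visible from any single state of a given diagram. This is precisely why the non-adequate Montesinos and pretzel cases (Lee, Garoufalidis--van der Veen, etc.) require delicate cancellation analyses rather than routine bookkeeping. So both the degree-realization step and the essentiality step are open in general, and your proposal should not present the first as established. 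As written, the proposal is an honest survey of the state of the art, not a proof, which is consistent with the fact that the paper treats the statement as a conjecture.
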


\begin{remark} Strictly speaking in \cite[Conjecture 1.6]{Effie-Anh-slope} we only required that $\displaystyle{ { \frac{\chi(S)} {{\abs{\partial S} \beta} } } } \in  jx_K$  and 
 $\displaystyle{ - \frac{\chi(S^{*})}{{\abs{\partial S^{*}} \beta^{*}}}} \in  jx^*_K$ without specifying that these values should correspond to points that correspond to the same values of $n$   for  which the slopes  $a(n)$ and $a^{*}(n)$ occur. We don't know if the seemingly 
stronger version statement of  \cite[Conjecture 1.6]{Effie-Anh-slope} is stronger than Conjecture \ref{SSC}. A related point is that, at the moment we don't know if there exist knots for which the sets $ js_K$  or $ js^*_K$
contain more than one point. In all cases for which the Jones slopes are computed, we have exactly one Jones slope in each of $ js_K$  or $ js^*_K$.

\end{remark}
  \smallskip
  
  \subsection{Progress}
Conjecture \ref{SSC} is known for the following families of knots: 
\begin{itemize}
\item  Adequate knots and in particular alternating knots   \cite{FKP, FKP-guts}. 
 \item  Iterated   torus knots and iterated  cables of adequate knots   \cite{TakataMB, Effie-Anh-slope, MoTa}.
\item Graph knots \cite{Takata}.
\item Families of non-alternating   3-tangle pretzel knots \cite{LeeVeen, LeeVeen2} .
\item Families of non-adequate Montesinos knots \cite{LeeGarVeen, LYL, LeeVeen2}.
\item Knots with up to  9 crossings \cite{ga-slope, Howie17, Effie-Anh-slope}.
\item Near-alternating knots \cite{Lee17} constructed by taking Murasugi sums of an alternating diagram with a non-adequate diagram.
\item  Iterated untwisted generalized  Whitehead doubles  of adequate knots  and torus knots \cite{Takata}.
\item Knots obtained by any finite sequence of cabling, connect sums,  and untwisted generalized  Whitehead doubles  of adequate knots and torus knots \cite{TakataMB, Effie-Anh-slope,MoTa}.
\end{itemize}
Under certain conditions Conjecture \ref{SSC} is known to be closed under   cabling operations and  Whitehead doubling operations \cite{Takata, Effie-Anh-slope}.

\smallskip

\section{ Exceptional surgeries and the figure eight knot}
In \cite{torus} we noted  that Conjecture \ref{SSC} implies that the degrees of the colored Jones polynomial distinguish torus knots and in particular the unknot:
  
\begin{theorem} \label{main}Suppose that $K$ is  a knot that satisfies the strong slope conjecture and let $T_{p,q}$ denote the $(p,q)$-torus knot.
If $d_+[J_{K}(n)]=d_+[J_{T_{p,q}}(n)]$ and  $d_-[J_{K}(n)]=d_-[J_{T_{p,q}}(n)]$, for all $n$,  then, up to orientation change,  $K$ is isotopic to  $T_{p,q}$.
\end{theorem}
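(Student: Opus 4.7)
My plan is to combine Conjecture \ref{SSC} with the classical classification of knot exteriors that contain essential annuli, and then extract the pair $(p,q)$ from the degree data. First I recall the relevant degree information for $T_{p,q}$ (coprime, $p,q\ge 2$). Up to mirror image one has $js_{T_{p,q}}=\{pq\}$ and $js^*_{T_{p,q}}=\{0\}$: the slope $pq$ is realized by the essential cabling annulus in the Seifert fibered exterior of $T_{p,q}$ (so $\chi=0$, $\abs{\partial}=2$, hence $jx_{T_{p,q}}=\{0\}$), while the slope $0$ is realized by the Seifert surface of genus $g(T_{p,q})=(p-1)(q-1)/2$ (giving $jx^*_{T_{p,q}}=\{pq-p-q\}$). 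The hypothesis $d_{\pm}[J_K(n)]=d_{\pm}[J_{T_{p,q}}(n)]$ transfers all of this data to $K$, and applying Conjecture \ref{SSC} to $K$ produces essential surfaces $S,S^*\subset M_K$ with boundary slopes $pq$ and $0$, satisfying $\chi(S)=0$ and $-\chi(S^*)/\abs{\partial S^*}=pq-p-q$.

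Since $S$ is essential with nonempty boundary and zero Euler characteristic, $S$ must be an essential annulus in $M_K$. By the Jaco--Shalen--Johannson characteristic submanifold theorem applied to the knot exterior, a nontrivial knot in $S^3$ whose exterior contains an essential annulus is either a torus knot $T_{a,b}$ or an $(a,b)$-cable $C_{a,b}(K')$ of a nontrivial companion $K'$, with $\gcd(a,b)=1$ and $a\ge 2$; in either case the cabling annulus has boundary slope $ab$, so matching with $S$ forces $ab=pq$ up to sign (the sign being absorbed into the mirror-image choice).

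The Seifert-slope surface $S^*$ then determines the pair $(p,q)$. For a nontrivial cable, $g(C_{a,b}(K'))=a\,g(K')+(a-1)(b-1)/2$, which combined with $ab=pq$, $\gcd(a,b)=1$, and $g(K)=(p-1)(q-1)/2$ rearranges to $2a\,g(K')=(a+b)-(p+q)$. Since $\gcd(p,q)=1$, the only coprime factorizations of $pq$ are $\{p,q\}$ and $\{1,pq\}$; a short case-check using $a\ge 2$ and $g(K')\ge 1$ rules out $\{a,b\}=\{1,pq\}$ (for which $(a+b)-(p+q)=1+pq-p-q$ is too small to equal $2a\,g(K')\ge 2pq$) and forces $g(K')=0$ when $\{a,b\}=\{p,q\}$. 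Hence $K'$ is the unknot and $K=T_{p,q}$; if $K$ is already a torus knot $T_{a,b}$, the same matching yields $\{a,b\}=\{p,q\}$ directly. The mirror-image option corresponds to $K$ being isotopic to the mirror of $T_{p,q}$, giving the ``up to orientation change'' qualifier, and the degenerate cases $p=\pm 1$ or $q=\pm 1$ reduce to the unknot detection of \cite{KL}.

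The step I expect to require the most care is the Diophantine case analysis above, together with reconciling meridian--longitude sign conventions on $\partial M_K$ for the cabling slope and matching them against the degree data in each handedness. The remaining ingredients --- the quasi-polynomial degrees of $J_{T_{p,q}}(n)$, the annulus theorem, and Conjecture \ref{SSC} itself --- are either classical or available by hypothesis.
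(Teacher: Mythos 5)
Your opening follows the route the paper itself indicates: the paper gives only a sketch of this theorem (the full argument is deferred to \cite{torus}), namely that the Jones slope $pq$ of $T_{p,q}$ is realized by the cabling annulus, so Conjecture \ref{SSC} hands $K$ an essential surface of zero Euler characteristic with integral boundary slope $pq$, whence $K$ must be a torus or cable knot. That part of your proposal is essentially right, modulo two points worth recording: an essential surface with $\chi=0$ may have M\"obius band components (pass to the boundary of a regular neighborhood to get an annulus), and the classification of essential annuli in knot exteriors also includes the meridional annuli of composite knots, which are excluded here only because the slope $pq$ is an integer rather than $1/0$.

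The completion, however, has two genuine gaps. First, Conjecture \ref{SSC} does not give you $g(K)=(p-1)(q-1)/2$: it gives an essential surface $S^{*}$ with boundary slope $0$ and $-\chi(S^{*})/\abs{\partial S^{*}}=pq-p-q$, and nothing in the conjecture forces $S^{*}$ to be connected, orientable, or to have a single boundary component, so $S^{*}$ need not be a Seifert surface and its Euler characteristic data need not bound the Seifert genus of $K$ in the direction you use. Second, even granting $g(K)\le (p-1)(q-1)/2$, your Diophantine step is incomplete: the coprime factorizations of $pq$ are not limited to $\{p,q\}$ and $\{1,pq\}$ unless $p$ and $q$ are both prime. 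For $(p,q)=(6,35)$, for instance, $\{a,b\}=\{2,105\}$ is another coprime factorization, and your inequality $2a\,g(K')\le (a+b)-(p+q)=66$ only yields $g(K')\le 16$, so the genus count does not exclude $K$ being a $(2,105)$-cable of a nontrivial companion. A completion that avoids both problems is to run the degree data itself, rather than the genus, through the cabling formula expressing $d_{\pm}[J_{C_{a,b}(K')}(n)]$ in terms of the degrees of the colored Jones polynomial of the companion $K'$; this pins down $(a,b)$ and reduces the identification of $K'$ to the unknot detection result of \cite{KL}, which is how the argument deferred to \cite{torus} is meant to conclude.
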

 
The proof of Theorem \ref{main} begins with the observation that  one of the Jones surfaces for $T_{p,q}$ is an annulus (the cabling annulus). This  implies that  $K$  
also admits a Jones surface of zero Euler characteristic, which in turn implies that $K$ must be a cable knot. The proof of the next theorem is similar in flavor as it begins with the observation that both the Jones surfaces of  the figure eight knot are punctured Klein bottles.

\begin{theorem} \label{mainhere}Suppose that $K$ is  a knot that satisfies the strong slope conjecture and let $F_8$ denote the figure eight knot.
If $d_+[J_{K}(n)]=d_+[J_{F_8}(n)]$ and  $d_-[J_{K}(n)]=d_-[J_{F_8}(n)]$, for all $n$,  then  $K$ is isotopic to  $F_8$.
\end{theorem}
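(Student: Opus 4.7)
The plan is to use the Jones degrees of $F_8$, via the strong slope conjecture applied to $K$, to produce two essential surfaces in $M_K$ whose boundary slopes lie at the extremal distance allowed by Gordon's theorem, and then to invoke the characterization of that extremal case.

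I would begin by recording the Jones-degree data for $F_8$: the polynomials $d_\pm[J_{F_8}(n)]$ are quadratic of period one, with Jones slopes $\pm 4$ (so $\beta = \beta^{*} = 1$) and linear-term coefficients translating, via Conjecture \ref{SSC}, to $\chi/(|\partial|\beta) = -1$ on both sides. This reflects the familiar fact that $F_8$ admits two Jones surfaces, both once-punctured Klein bottles, of boundary slopes $\pm 4$. Applying Conjecture \ref{SSC} to $K$, which by hypothesis has matching degree data, yields essential surfaces $S, S^{*} \subset M_K$ of boundary slopes $4$ and $-4$, each satisfying $\chi/|\partial| = -1$. An Euler-characteristic calculation on connected components shows that every essential component of $S$ or $S^{*}$ is either a punctured torus of genus one or a punctured Klein bottle of crosscap number two; after restricting to a single component carrying the slope, one may take $S, S^{*}$ to be connected surfaces of this form.

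The key observation is then that $\Delta(4, -4) = 8$, the extremal value in Gordon's theorem \cite{Gordon}, which bounds the distance between boundary slopes of essential once-punctured tori in any irreducible $3$-manifold with toroidal boundary and characterizes the equality case. Combined with the analogous bound for once-punctured Klein bottles (derivable either from existing literature or from a reduction via the orientable double cover of a regular neighborhood), this forces $M_K$ to be homeomorphic to the exterior of $F_8$, provided $M_K$ is hyperbolic. Non-hyperbolic possibilities are eliminated separately: torus knots by Theorem \ref{main} (slopes $\pm 4$ are not realized as Jones slopes of any nontrivial torus knot), and satellite knots by the observation that a nontrivial satellite exterior cannot support two essential genus-one or crosscap-two surfaces at slope distance $8$. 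Since $F_8$ is determined by its exterior, one concludes $K \cong F_8$.

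The main obstacle I anticipate is the uniform handling of the Klein-bottle case, since Gordon's theorem is stated for once-punctured tori. The reduction via the orientable double cover of a regular neighborhood of a punctured Klein bottle produces a twice-punctured torus whose boundary lies on $\partial M_K$ with slope $\pm 4$, but verifying essentiality of this auxiliary surface in the relevant manifold, and translating the distance bound back, requires care. A secondary issue is making the satellite-knot exclusion rigorous: one must analyze how essential genus-one or crosscap-two surfaces in a satellite exterior can arise across the JSJ torus, and bound their possible slope distances, in order to rule out a satellite $K$ realizing the same Jones degree data as $F_8$.
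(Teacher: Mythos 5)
Your overall route is the one the paper takes: extract the Jones slopes $\pm 4$ and the normalized Euler characteristic $-1$ from the degree data of $F_8$, invoke the strong slope conjecture to produce essential surfaces in $M_K$ with boundary slopes $4$ and $-4$, recognize them as punctured tori or punctured Klein bottles, and feed the slope distance $8$ into Gordon's theorem. The one substantive divergence is that you restrict Gordon's theorem to the hyperbolic case and then try to dispose of the non-hyperbolic possibilities separately; the satellite exclusion you sketch (that a nontrivial satellite exterior cannot carry two such surfaces at slope distance $8$) is not justified, and you rightly flag it as the weak point of your argument. This detour is unnecessary: \cite[Theorem 1.1]{Gordon} is stated for compact irreducible $3$-manifolds with a torus boundary component, with no hyperbolicity hypothesis, and it characterizes the manifolds attaining distance $8$ between boundary slopes of essential punctured tori as exactly the figure-eight exterior and its sister. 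Since every knot exterior is irreducible and the sister is not a knot complement in $S^3$, the theorem pins down $M_K$ directly, and the Gordon--Luecke theorem \cite{GL} finishes, exactly as in the paper --- no separate treatment of torus knots or satellites is needed. Your worry about the Klein-bottle case is resolved the same way the paper resolves it: replacing a punctured Klein bottle by the frontier of its regular neighborhood gives the orientable double cover, a punctured torus whose boundary curves all have the same slope as the original, and Gordon's theorem allows punctured tori with any number of boundary components, so no separate distance bound for Klein bottles is required.
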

\begin{proof}
 The degrees  $d{\pm}[J_{F_8}(n)]$  are known (see, for example,  \cite{ga-slope, FKP}).
  We have
 $$d_-[J_{K}(n)]=d_-[J_{F_8}(n)]= - n^2+{1\over 2}n+{1\over 2},$$
and 
 $$d_+[J_{K}(n)]=d_+[J_{F_8}(n)]= n^2-{1\over 2}n-{1\over 2}.$$
 Thus we obtain
 
 $$js_K:= \left\{ 4n^{-2}d_+[J_K(n)]  \right\}' =\{ 4\} \quad
 \mbox{and} \quad js^{*}_K:= \left\{ 4n^{-2}d_-[J_K(n)] \right\}' =\left\{-4\right\},$$
and 
$$jx_K:= \left\{ 2n^{-1}\ell d_+[J_K(n)]  \right\}' =\{-1\} \quad
 \mbox{and}\quad  jx^*_K:= \left\{ 2n^{-1}\ell d_-[J_K(n)]  \right\}'=\left\{-1\right\} .$$
Since $K$ satisfies the strong slope conjecture we  have essential surfaces $S,S^{*}$ in the exterior of $K$ such that

\begin{enumerate}
\item the boundary slope of $S$ is $4$ and $\displaystyle{{\chi(S)\over {{\abs{\partial S}} }}}= -1,$
 and
\item  the boundary slope of $S^{*}$ is -4 and $\displaystyle{{\chi(S^{*})\over {{\abs{\partial S^{*}}} }}}= -1.$
\end{enumerate}

This implies that $\chi(S)=-\abs{\partial S}$ and $\chi(S^{*})=-{{\abs{\partial S^{*}}} }.$
Thus  $S$, $S^{*}$ are either punctured tori or punctured Klein bottles.  By passing to the orientable double we can assume that each component of $S,S^{*}$
is a punctured torus.  Thus the knot exterior $M_K$ contains  punctured tori with  boundary slopes $s=4$  and $s^{*}=-4$.  Let  $i(s, s^{*})$
denote  the geometric  intersection of $s, s^{*}$ on $\partial M_K$. In this case we have  $i(s, s^{*})=8$.
By a result of Gordon  \cite[Theorem 1.1]{Gordon} there are only two irreducible 3-manifolds $M$ such that a toroidal component of $\partial M$ contains two boundary slopes  $s, s^{*}$
of punctured tori with $i(s, s^{*})=8$. These are the lowest volume hyperbolic 3-manifolds with one cusp. From these two, the only one that is
a knot complement in $S^3$ is the 
 complement of the figure eight knot. Thus we conclude that $M_K$ is homeomorphic to the complement of $F_8$.  By  the Gordon-Luecke Knot Complement theorem  \cite{GL}, $K$ has to be isotopic to
 $F_8.$
\end{proof}

\smallskip

To continue, we  briefly recall the definition and some notation about adequate knots: Let $D$ be a link diagram, and $x$ be a crossing of $D$.  Associated to
$D$ and $x$ are two link diagrams,  called the \emph{$A$--resolution} and \emph{$B$--resolution} of
the crossing. See Figure \ref{resolutions}.  A {\em{state}} on $D$ is a function $\sigma=\sigma(D)$ that assigns one of these two resolution to each crossing of $D$.
Applying the $A$--resolution (resp. $B$--resolution)  to each crossing leads to  a collection of disjointly embedded circles $s_A(D)$
(resp.  $s_B(D)$).  

\begin{define} {\rm The diagram $D$ is called  \emph{$A$--adequate}  (resp. \emph{$B$--adequate})
if for each crossing of $D$ the two arcs of $s_A(D)$
(resp.  $s_B(D)$) resulting from the resolution of the crossing lie on different circles. A knot diagram $D$
is \emph{adequate} if it is
both $A$-- and $B$--adequate.  Finally, a knot that admits an adequate diagram
is also called \emph{adequate}.}
\end{define}

\begin{figure}
  \includegraphics[scale=1.4]{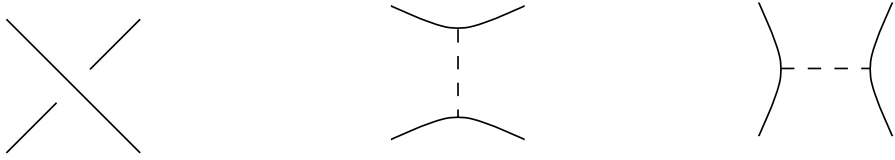}
  
  \caption{From left to right: A crossing, the $A$-resolution and  
 the $B$-resolution.}
    
  \label{resolutions}
\end{figure}
Starting with a state  $\sigma=\sigma(D)$ we construct a {\em{state surface}}  $S_{\sigma}=S_{\sigma}(D)$ as follows:
Each circle of  $\sigma(D)$
bounds a disk on the projection sphere  $S^2\subset S^3$. This
collection of disks can be disjointly embedded in the ball below the
projection sphere. At each crossing of $D$, we connect the pair of
neighboring disks by a half-twisted band to construct a surface
whose boundary is $K$.  For details see \cite{FKP, FKP-guts}.

 The state surfaces corresponding  to $s_A(D)$ and
 $s_B(D)$ are denoted by 
 $S_A(D)$  and   $S_B(D)$, respectfully.
In \cite{ozawa} Ozawa showed that the state surface $S_A(D)$  is essential in the exterior of $K$ if and only if $D$ is an $A$-adequate diagram.
Similarly,  $S_B(D)$  is essential in the exterior of $K$ if and only if $D$ is an $B$-adequate diagram. For a different proof of these results see \cite{FKP-guts}.
Thus, in particular, 
if $D$ is an adequate diagram of a knot $K$ then,
$S_A(D)$  and $S_B(D)$ are essential surfaces in the exterior of $K$. 
\vskip 0.08in

To continue we recall the following well known definition.

\begin{define} {\rm A slope $s$ for a hyperbolic  knot is called  \emph{exceptional} if the 3-manifold obtained by filling $M_K$ along $s$ is not hyperbolic.}
\end{define}
The proof of Theorem \ref{main} shows that both the  Jones slopes of the knot  $F_8$ are exceptional.
Next we will see that  $F_8$ is the only adequate knot that has this property.

 \begin{corollary} \label{co:adequate}Suppose that $K$ is a hyperbolic adequate knot such that
 both the Jones slopes of $K$ are exceptional. Then $K=F_8$.
 \end{corollary}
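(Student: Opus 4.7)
The plan is to combine the relationship between Jones slopes and the crossing number of an adequate knot (coming from Equation (\ref{eq:alternating})) with the Lackenby--Meyerhoff bound on the distance between exceptional slopes in a one-cusped hyperbolic $3$-manifold. First, since $K$ is adequate, the strong slope conjecture holds for $K$ by \cite{FKP, FKP-guts}, and moreover $p_K=1$ and Equation (\ref{eq:alternating}) holds with $c=c(K)$, as recalled in the paper. The standard formulas for the top and bottom $t$-degrees of the colored Jones polynomial on an adequate diagram imply in particular that the Jones slopes $s\in js_K$ and $s^*\in js^*_K$ are integers, and comparing the quadratic coefficients of $d_+[J_K(n)]$ and $d_-[J_K(n)]$ via Equation (\ref{eq:alternating}) yields $s-s^*=2c(K)$. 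Hence the slope distance satisfies $\Delta(s,s^*)=|s-s^*|=2c(K)$.

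Next, I would invoke the theorem of Lackenby--Meyerhoff (settling Gordon's conjecture): in any one-cusped orientable hyperbolic $3$-manifold, the distance between any two exceptional slopes is at most $8$. Since $K$ is hyperbolic and, by assumption, both Jones slopes are exceptional, this forces $2c(K)\leq 8$, i.e.\ $c(K)\leq 4$. To finish, I observe that among hyperbolic knots in $S^3$ there is only one of crossing number at most $4$: the unknot has $c=0$, the two $3$-crossing knots are the trefoils $T(2,\pm 3)$, which are torus knots and hence non-hyperbolic, and $F_8$ is the unique knot of crossing number $4$. Therefore $K=F_8$.

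The step I expect to require the most care is the first one, namely verifying in the adequate case that the Jones slopes are honest integers (so that $\Delta(s,s^*)$ is literally $|s-s^*|$ rather than a more complicated cross-product) and that Equation (\ref{eq:alternating}) delivers the identity $s-s^*=2c(K)$. Both facts rest on standard but nontrivial computations of the extreme $t$-degrees of $J_K(n)$ on an adequate diagram from \cite{FKP}; once they are in hand, the remainder of the argument is a clean combination of this Jones-slope/crossing-number identity with the Lackenby--Meyerhoff distance bound, with no further geometric input needed beyond the hyperbolicity of $M_K$.
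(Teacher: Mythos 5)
Your proof is correct and follows essentially the same route as the paper: it extracts the Jones-slope distance $2c(K)$ from the extreme-degree formulas on an adequate diagram and then applies the Lackenby--Meyerhoff bound to get $c(K)\leq 4$, whence $K=F_8$ by hyperbolicity. One small inaccuracy: Equation (\ref{eq:alternating}) with $c=c(K)$ holds only for alternating knots --- for a general adequate knot it is Equation (\ref{eq:adequate}), with the Turaev-genus correction in the linear and constant terms, that holds --- but since you only use the quadratic coefficient, which equals $c(K)$ in either case, the argument is unaffected.
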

\begin{proof}

It is known that  the number of negative crossings  $c_-(D)$  of an $A$--adequate knot diagram is a knot invariant.
Similarly, the number of positive crossings $c_+(D)$ of  a  $B$-adequate knot diagram is a knot invariant.
In fact, if $K$ is adequate, then the crossing number of $K$ is realized by the adequate diagram; that is  we have $c(K)=c(D)=c_-(D)+c_+(D)$ \cite{Li}.
Let $v_A(D)$ and  $v_B(D)$ be the numbers of  circles in $s_A(D)$ and $s_B(D)$, respectively.  The boundary slope of $S_A$ is $ -2c_- (D)$ and
$\chi(S_A)= v_A(D)-c(D)$.  The boundary slope of $S_B$ is $ 2c_+ (D)$ and
$\chi(S_B)=v_B(D)-c(D)$. 
By \cite{FKP}, the surfaces $S_A=S_A(D)$ and $S_B=S_B(D)$ satisfy the strong slope conjecture for $K$.

 That is 
we have
$$4\, d_-[J_{K}(n)] =  -2c_- (D) n^2 + 2(c(D) -v_A(D)) n  +2 v_A(D) -2 c_+(D),$$

and

$$
4 \, d_+[J_{K}(n)] = 2c_+ (D)n^2 + 2(v_B(D) - c(D)) n +2 c_-(D)-  2v_B(D).
$$

Thus the distance of the two Jones slopes is $i( 2c_+ (D), \  -2c_- (D))= |2c_+ (D)+2c_- (D)|=2c(D).$ 

Gordon's conjecture, proved by Lackenby and Meyerhoff \cite{LackenbyMa}, states that if $s,s^{*}$ are exceptional boundary slopes for $K$ then
$i(s, \ s^{*})\leq 8$.  Thus in order for $s= 2c_+ (D)$ and $s^{*}= -2c_- (D)$ to be exceptional we must have
$c(D)\leq 4$. Since $K$ is hyperbolic, $K=F_8$.
\end{proof}
\vskip 0.08in

\begin{example} Consider the 3-string pretzel knots $K=P(r, s, t)$ such that  $r<0 < s, t$ and $-2r<s,t$. It has exactly  two Jones slopes with distance $2(s+t)$ (see Proposition \ref{counter} below).
Since by assumption $s,t>2$, we cannot have $2(s+t)\leq 8.$ Thus not both of the Jones slopes can be exceptional.

As another example, let us mention the knot  pretzel knot $P(-2, 3, 7)$, which is  known to have seven exceptional slopes.
The Jones slopes of $P(-2, 3, 7)$ are $\{ \frac{37}{2}, 0\}$ and from these only $\frac{37}{2}$ is exceptional.
\end{example}

\begin{question} Are there hyperbolic knots, other than the figure eight, that have more than one exceptional Jones slopes?
\end{question}

\section{Characteristic Jones surfaces and alternating knots}
We begin by recalling from \cite{KL} that in all the cases where Conjecture \ref{SSC} is proved,  for each Jones slope we can find a Jones surface where 
the {\em{number of sheets}}
 $b|\partial S|$   divides the Jones period $p_K$. This observation led  to the following definition \cite[Definition 3.2]{KL}.
\begin{define}\label{characteristic} {\rm
We call a Jones surface $S$ of a knot $K$ {\emph {characteristic}} if the number of sheets of $S$ divides the Jones period of $K$. }
\end{define}

\begin{example} \label{rem:adequate} An adequate knot (and thus in particular an alternating knot) has Jones period $p_K=1$, two  Jones  slopes and two corresponding
Jones surfaces each with a single boundary component \cite{survey, FKP}.
Note, that  the characteristic Jones surfaces are spanning surfaces that are often non-orientable. In these cases the orientable double cover is also a Jones surface
but it is no longer characteristic since it has two boundary components. 
\end{example}

\begin{question} \label{lessthanp}
Is it true that for every Jones slope of a knot $K$ we can find a characteristic Jones surface?
\end{question}

If $K$ is an alternating  knot then  we have 
$$p_K=1 \ \  {\rm and} \ \   2d_+[J_K(n)]- 2d_-[J_K(n)]=c n^2+ (2-c)n-2,$$

where $c=c(K)$ is the crossing number of $K$.  Thus, one direction Conjecture \ref{characterizealtern} is known.
Furthermore,  an alternating knot  $K$ satisfies the strong slope conjecture and  every Jones slope is realized by a characteristic  Jones surface.  This follows, for example, from the discussion in the proof of 
Corollary  \ref{co:adequate}. The state surfaces $S_A(D), S_B(D)$ corresponding to any reduced alternating diagram $D=D(K)$ are in fact the checkerboard surfaces of $D$.

We have the following converse:

\begin{theorem} \label{alternating} Suppose that $K$ is a knot that satisfies the strong slope conjecture and such that every Jones slope is realized by a characteristic  Jones surface.
Suppose, moreover,  that we have 

$$ p_K=1 \ \  {\rm and} \ \ 2d_+[J_K(n)]- 2d_-[J_K(n)]= cn^2+(2-c)n-2,$$  for some $c\in \ZZ$.
Then, $K$ is alternating and $c$ is the crossing number of $K$.
\end{theorem}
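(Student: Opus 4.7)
The plan is to combine the data provided by the strong slope conjecture with Joshua Howie's characterisation of alternating knot exteriors \cite{howie}, which states that $K\subset S^3$ is alternating if and only if $M_K$ contains a pair of connected essential spanning surfaces $S_1,S_2$ satisfying $\chi(S_1)+\chi(S_2)+\tfrac{1}{2}i(\partial S_1,\partial S_2)=2$, where $i(\cdot,\cdot)$ denotes the minimal geometric intersection number of slopes on $\partial M_K$.  Since $p_K=1$, the degree quasi-polynomials are ordinary polynomials; write $d_+[J_K(n)]=An^2+Bn+C$ and $d_-[J_K(n)]=A^*n^2+B^*n+C^*$.  Let $S,S^*$ be characteristic Jones surfaces realising the top and bottom Jones slopes, respectively.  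Since $|\partial S|\beta$ and $|\partial S^*|\beta^*$ divide $p_K=1$, we have $|\partial S|=\beta=|\partial S^*|=\beta^*=1$; after discarding closed components we may assume $S,S^*$ are connected.  Conjecture \ref{SSC} then tells us that the boundary slopes are the integers $\alpha=4A$ and $\alpha^*=4A^*$, while the Euler characteristics satisfy $\chi(S)=2B$ and $\chi(S^*)=-2B^*$.

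Matching coefficients in $2d_+[J_K(n)]-2d_-[J_K(n)]=cn^2+(2-c)n-2$ yields $2(A-A^*)=c$ and $2(B-B^*)=2-c$.  The first gives $\alpha-\alpha^*=4(A-A^*)=2c$, so the geometric intersection number on $\partial M_K$ is $i(\partial S,\partial S^*)=|\alpha-\alpha^*|=2c$.  The second gives $\chi(S)+\chi(S^*)=2B-2B^*=2-c$.  (Note $c\geq 0$: otherwise $\chi(S)+\chi(S^*)>2$, impossible for essential surfaces with non-empty boundary in a knot complement.)  Substituting into Howie's quantity, $\chi(S)+\chi(S^*)+\tfrac{1}{2}i(\partial S,\partial S^*)=(2-c)+c=2$, and Howie's theorem concludes that $K$ is alternating.

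To identify $c$ with the crossing number, apply the already-established easy direction of Conjecture \ref{characterizealtern} (recalled just above the statement of the theorem; see also \cite{Kaind} and Proposition \ref{enough}): any alternating knot satisfies $2d_+[J_K(n)]-2d_-[J_K(n)]=c(K)n^2+(2-c(K))n-2$.  Comparing with the hypothesis of the theorem and equating quadratic coefficients forces $c=c(K)$.

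The main obstacle, and the point where the characteristic hypothesis is essential, is arranging the intersection term in Howie's identity to equal exactly $c$ rather than a positive multiple thereof.  Without the characteristic condition $|\partial S|\beta=|\partial S^*|\beta^*=1$, each boundary of $S$ would meet each boundary of $S^*$ in $|\alpha\beta^*-\alpha^*\beta|$ points, contributing $|\partial S|\cdot|\partial S^*|\cdot|\alpha\beta^*-\alpha^*\beta|$ to $i(\partial S,\partial S^*)$, and the numerical miracle $(2-c)+c=2$ would be replaced by something strictly larger than $2$.  The secondary technical point is ensuring that the surfaces produced by the strong slope conjecture can be taken connected (extract a component) and that their boundaries realise the minimum geometric intersection number on $\partial M_K$, so that Howie's hypotheses are met verbatim.
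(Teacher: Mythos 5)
Your proposal is correct and follows essentially the same route as the paper: use $p_K=1$ together with the characteristic hypothesis to get connected spanning Jones surfaces with integer boundary slopes, translate the coefficient identities into $i(\partial S,\partial S^*)=2c$ and $\chi(S)+\chi(S^*)=2-c$, apply Howie's characterization to conclude $K$ is alternating, and then match against the known alternating formula to get $c=c(K)$. The only cosmetic difference is that the paper cites the fact that period-one knots have integer Jones slopes, while you extract $\beta=1$ directly from the characteristic condition; both are fine.
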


\begin{proof} 
Since we have $p_K=1$,  for each of $d_{\pm }[J_K(n)]$ we have exactly one Jones slope. That is, we have
$js_K= \{ s\}$ and
$js^*_K=\{ s^{*}\}$.
Furthermore, since  
knots of period one have integer  Jones slopes (\cite[Lemma 1.11]{ga-slope}, \cite[Propositrion 3.1]{KL}), both of  $s$, and $s^{*}$ are integers.

Since we assumed that  each Jones slope of $K$  is realized by a characteristic  Jones surface, we conclude that  we can take the Jones surfaces, say $S, S^{*}$, corresponding to
$s, s^{*}$, respectively, to be spanning surfaces of $K$.  

Finally, since we assumed that $2d_+[J_K(n)]- 2d_-[J_K(n)]= cn^2+(2-c)n-2$, for some $c\in \ZZ$,
we conclude that  
$i(\partial S, \partial S^{*})=s-s^{*}=2c,$
where $i(\partial S, \partial S^{*})$ denotes the geometric intersection of the curves $\partial S, \partial S^{*}$ on $\partial M_K$, and
that $\chi(S)+ \chi(S^{*})=2-c$.  Thus in particular, we have

$$\chi(S)+ \chi(S^{*})+ {1\over 2} i(\partial S, \partial S^{*})=2.$$ 

By Howie's result  \cite[Theorem 2]{howie} it follows that $K$ is alternating  and in fact
$S, S^{*}$ are the checkerboard surfaces corresponding to an alternating diagram of $K$. But then $c=c(K)$  by the discussion before the statement of the theorem. \end{proof}
 
As a corollary of Theorem \ref{alternating} we have the following.
\begin{corollary} Suppose that $K$ is an adequate knot. Then $K$ is alternating if and only if we have
$$ 2d_+[J_K(n)]- 2d_-[J_K(n)]= cn^2+(2-c)n-2,  $$
 for some $c\in\ZZ$.
\end{corollary}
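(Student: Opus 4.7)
The plan is to derive the corollary as a direct consequence of Theorem \ref{alternating} by verifying that an adequate knot automatically satisfies the hypotheses of that theorem. The forward direction requires no new work: the discussion preceding Theorem \ref{alternating} (together with the reference \cite{Kaind} and Proposition \ref{enough}) shows that an alternating knot $K$ has $p_K=1$ and satisfies the degree identity with $c=c(K)$.

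For the converse, I assume that $K$ is adequate and that the degree identity holds for some $c\in\ZZ$, and I set out to check each hypothesis of Theorem \ref{alternating}. First, by \cite{FKP, FKP-guts} the strong slope conjecture holds for $K$, as recorded in the Progress section. Second, Example \ref{rem:adequate} records that an adequate knot has $p_K=1$, has exactly two Jones slopes, and admits corresponding Jones surfaces given by the state surfaces $S_A(D)$ and $S_B(D)$ of any reduced adequate diagram $D$, each of which has a single boundary component. Third, since $p_K=1$ forces the Jones slopes to be integers (\cite[Lemma 1.11]{ga-slope}, \cite[Proposition 3.1]{KL}), the denominator $\beta$ in Conjecture \ref{SSC} equals $1$, so the number of sheets $\beta|\partial S|$ equals $1$ and trivially divides $p_K=1$. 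Hence both $S_A(D)$ and $S_B(D)$ qualify as characteristic Jones surfaces in the sense of Definition \ref{characteristic}.

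With every hypothesis of Theorem \ref{alternating} in place, that theorem yields the conclusion that $K$ is alternating and that $c$ coincides with the crossing number $c(K)$. There is no real obstacle in this argument: the substantive content lies in Theorem \ref{alternating} itself (which ultimately rests on Howie's criterion \cite{howie}) and in the verification of the strong slope conjecture for adequate knots via the essential state surface machinery of \cite{FKP, FKP-guts}. The corollary simply packages these inputs, observing that the characteristic-Jones-surface hypothesis of Theorem \ref{alternating} is automatic in the adequate setting because the state surfaces are spanning surfaces and the Jones period is trivial.
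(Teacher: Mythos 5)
Your proposal is correct and follows essentially the same route as the paper: the paper's proof likewise just checks that adequate knots satisfy the strong slope conjecture, have Jones period one, and admit characteristic Jones surfaces for each Jones slope, and then invokes Theorem \ref{alternating}. Your extra verification that the state surfaces $S_A(D)$, $S_B(D)$ are characteristic (single boundary component and $\beta=1$) is a slightly more explicit spelling-out of what the paper summarizes as ``as discussed earlier.''
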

\begin{proof}  Conjecture \ref{SSC} has been proved for adequate knots \cite{Effie-Anh-slope}.  Furthermore, as  discussed earlier, adequate knots 
have period one and for every Jones slope we can find a characteristic Jones surface.
Thus, the conclusion follows from Theorem \ref{alternating}.

\end{proof}

\begin{remark}{\rm Theorem  \ref{alternating}  shows that the strong slope conjecture together with a positive answer to Question \ref{lessthanp} implies Conjecture
\ref{characterizealtern} stated in the  Introduction. We also note, that if Conjecture \ref{characterizealtern}  is true then  the degrees $d_{\pm}[J_K(n)]$ would detect 
an alternating knot as long as they detect it among alternating knots with the same crossing number.  That is, if we had a knot $K$ such that  $d_{\pm}[J_K(n)]= d_{\pm}[J_{K'}(n)]$,
where $K'$ is alternating, then  by Theorem \ref{alternating} we would conclude that $K$ is also alternating with $c(K)=c(K')$. So if  $d_{\pm}[J_K(n)]$ distinguishes $K'$ among alternating knots of the same crossing number,
it will detect it among all knots.  Given a prime reduced alternating diagram $D=D(K)$, the degrees $d_{\pm}[J_K(n)]$ are completely determined by the quantities 
$c_- (D), c_+ (D), v_B(D), v_A(D)$, introduced in the proof of Corollary \ref{co:adequate}. Thus,
since $v_B(D), v_A(D)$ are also the numbers of the checkerboard regions of $D$,
 given two reduced alternating diagrams of the same crossing number one can decide whether they are distinguished by the degree of their colored Jones polynomial by a direct diagrammatic inspection.
We illustrate these points with a few examples. We already know that Conjecture \ref{SSC}  implies that  $d_{\pm}[J_K(n)]$, detect the trefoil and figure eight knots.
\begin{itemize} 
\item  If Conjecture \ref{characterizealtern}  is true then  the degrees $d_{\pm}[J_K(n)],$  would detect
the $5_2$ knot: For, suppose that for  a knot $K$ the degrees  $d_{\pm}[J_K(n)]$ are the same as these of  the knot $5_2$.  Then by Theorem \ref{alternating}
$K$ is alternating,  and we have  $c(K)=5$. Thus $K=T_{2, 5}$ or $K=5_2$. Since $T_{2, 5}$ is distinguished from $5_2$ by the degrees of the colored Jones polynomial we conclude that
$K=5_2$. 

\item Now we discuss alternating knots with crossing number six: The degrees $d_{\pm}[J_K(n)]$ distinguish
the knots $6_1, 6_2, 6_3$ from each other. More specifically, the  quantities $c_- (D), c_+ (D)$ (a.k.a. the Jones slopes) distinguish  $6_1$ from $6_2$ and $6_3,$ while the quantities $ v_B(D), v_A(D)$
distinguish $6_2$ and $6_3$ from each other. Hence, if Conjecture \ref{characterizealtern}  is true then,  the degrees $d_{\pm}[J_K(n)]$ would detect  any of $6_1, 6_2, 6_3.$
\end{itemize}}
\end{remark}

The next proposition shows that in order to prove  Conjecture
\ref{characterizealtern}, it is enough to show that if $K$ is a knot that satisfies equation \ref{eq:alternating}, then we must have $c=c(K)$.
\begin{proposition} \label{enough} If  $K$ is a knot such that
$$
2d_+[J_K(n)]- 2d_-[J_K(n)]=c(K) n^2+ (2-c(K))n-2, 
$$
then $K$ is alternating.
\end{proposition}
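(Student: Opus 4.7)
The plan is to specialize the hypothesis at $n=2$ and reduce the statement to the classical theorem on the breadth of the ordinary Jones polynomial. The reason this should work is that the right-hand side $c(K)n^2+(2-c(K))n-2$ has been designed so that its value at $n=2$ exactly matches the equality case of the classical Tait-type bound $\mathrm{span}(V_K)\leq c(K)$.

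First I would substitute $n=2$ into the hypothesis. The right-hand side becomes
\[
c(K)\cdot 4 + (2-c(K))\cdot 2 - 2 = 2c(K)+2,
\]
so $d_+[J_K(2)] - d_-[J_K(2)] = c(K)+1$. Next I would translate this into a statement about the (reduced) classical Jones polynomial $V_K(t)$. Under the normalization $J_{\text{unknot}}(n) = (t^{n/2}-t^{-n/2})/(t^{1/2}-t^{-1/2})$ used in the paper, one has $J_K(2,t) = (t^{1/2}+t^{-1/2})\, V_K(t)$, so multiplication by the factor $(t^{1/2}+t^{-1/2})$ (whose breadth in $t$ equals one) adds exactly $1$ to the breadth. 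Consequently
\[
\mathrm{span}(V_K) = \bigl(d_+[J_K(2)] - d_-[J_K(2)]\bigr) - 1 = c(K).
\]

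Finally I would invoke the classical theorem of Kauffman, Murasugi, and Thistlethwaite (which is presumably the content of \cite{Kaind}): for any knot one has $\mathrm{span}(V_K)\leq c(K)$, with equality if and only if $K$ is alternating. Applying this to our $K$ gives the desired conclusion. The only real content of the argument is this classical equality case, so there is no serious obstacle beyond keeping the normalization conventions straight; the essentially ``quasi-polynomial'' input at general $n$ collapses, at $n=2$, to an input about the ordinary Jones polynomial that was already known at the time of the Tait conjectures.
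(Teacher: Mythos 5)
Your proof is correct, but it takes a genuinely different (and more elementary) route than the paper. You specialize the hypothesis at $n=2$, where the right-hand side evaluates to $2c(K)+2$, peel off the unknot factor $t^{1/2}+t^{-1/2}$ to get $\mathrm{span}(V_K)=c(K)$, and invoke the equality case of the Kauffman--Murasugi--Thistlethwaite span bound. The paper instead uses the inequality
$2d_+[J_K(n)]-2d_-[J_K(n)]\leq c(K)n^2+(2-c(K)-2g_T(D))n+2g_T(D)-2$
from \cite{Kaind}, valid for all $n$ and any minimal diagram $D$, compares linear coefficients to force $g_T(D)=0$, and then cites \cite[Corollary 4.6]{TG} to conclude $D$ is alternating. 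Your argument buys a proof that only needs the classical Jones polynomial and a 1980s theorem; the paper's argument buys robustness: since $d_\pm[J_K(n)]$ are only guaranteed to be quasi-polynomials for $n>n_K$, the asymptotic comparison of coefficients still works if the displayed identity is only assumed for large $n$, whereas your specialization genuinely needs the identity to hold at $n=2$. Under the paper's convention elsewhere (hypotheses of this type are imposed ``for all $n$''), this is not an obstruction, but it is worth flagging.

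Two small corrections of attribution and statement. First, \cite{Kaind} is not the classical span theorem; it is the Turaev-genus refinement for the colored Jones polynomial, and the classical result you actually need is due to Kauffman, Murasugi, and Thistlethwaite. Second, the equality case $\mathrm{span}(V_K)=c(K)\Rightarrow K$ alternating is most cleanly justified exactly along the lines of the paper's proof: for a connected diagram $D$ one has $\mathrm{span}(V_K)\leq c(D)-g_T(D)$, so equality at a minimal diagram forces $g_T(D)=0$, which characterizes alternating diagrams; if you instead quote Thistlethwaite's ``prime non-alternating diagrams have strict inequality,'' you should add the standard connected-sum reduction to handle non-prime minimal diagrams. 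Neither point is a gap in the mathematics, only in the citations.
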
 
\begin{proof} Let $D$ be a knot diagram of $K$ that realizes $c(K)$ and let $g_T(D)$ denote the Turaev genus of $D$ \cite{TG}.
Then we have
$$
2d_+[J_K(n)]- 2d_-[J_K(n)]\leq c(K) n^2+ (2-c(K)-2g_T(D))n+2g_T(K)-2,
$$
for all $n\in \NN$. See for example \cite{Kaind}. Thus we must have $2-c(K)\leq 2-c(K)-2g_T(D)$, which implies $g_T(D)=0$. 
Now by \cite[Corollary 4.6]{TG}, $D$ must be an alternating diagram.
\end{proof}

 As mentioned above, alternating knots are the only knots that have Turaev genus zero \cite[Corollary 4.6]{TG}.  Thus the degree span condition in  the statement of  Conjecture \ref{characterizealtern} can be reformulated to say
\begin{equation}
2d_+[J_K(n)]- 2d_-[J_K(n)]=c n^2+ (2-2g_T(K)-c)n+2g_T(K)-2, 
\label{eq:adequate}
\end{equation}
where $g_T(K)$ denotes the Turaev genus of $K$ and $c$ is an integer.  By \cite{Kaind} adequate  knots satisfy condition (2) above, and they have Jones period equal to one.
One can ask whether  these conditions characterize adequate knots. The following proposition, shown to me by Christine Lee, shows that this is not the case.

\begin{proposition} \label{counter} Consider any  3-string pretzel knot $K=P(r, s, t)$  with  $r<0 < s, t$ and $-2r<s,t.$ Then we have $ p_K=1$ and

$$2d_+[J_K(n)]- 2d_-[J_K(n)]=c n^2+ (2-2g_T(K)-c)n+2g_T(K)-2,$$
but $K$ is non-adequate.
\end{proposition}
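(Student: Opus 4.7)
The plan is to combine an explicit colored Jones polynomial computation for the family $K=P(r,s,t)$ with a diagrammatic computation of the Turaev genus, verify equation (\ref{eq:adequate}) coefficient by coefficient, and argue non-adequacy separately. First I would invoke the explicit formulas for $d_\pm[J_K(n)]$ that have been established for this family in \cite{LeeGarVeen, LeeVeen, LYL}. Under the hypotheses $r<0<s,t$ and $-2r<s,t$, the degrees $d_\pm[J_K(n)]$ turn out to be honest quadratic polynomials in $n$ rather than quasi-polynomials, so all coefficient functions $a(n),b(n),c(n)$ are constants; this immediately yields $p_K=1$ and integer Jones slopes expressible in $r,s,t$.

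Next I would compute the Turaev genus $g_T(K)$ directly from the standard pretzel diagram $D=D(r,s,t)$. The crossing number is $c(K)=|r|+s+t$ and is realized by $D$, while the state circle counts $v_A(D)$ and $v_B(D)$ can be read off from the pretzel structure together with the assumption $-2r<s,t$ on the sign pattern. Plugging these into $g_T(D)=(2+c(D)-v_A(D)-v_B(D))/2$ gives $g_T(K)$ as a closed expression in $r,s,t$. Substituting the resulting values of $c(K)$ and $g_T(K)$ into the right-hand side of (\ref{eq:adequate}) and comparing with the expression for $2d_+[J_K(n)]-2d_-[J_K(n)]$ obtained from step one reduces the claim to a straightforward algebraic identity in $r,s,t$, which I would verify by matching the coefficients of $n^2$, $n$, and the constant term.

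The delicate step is showing that $K$ itself is non-adequate, i.e.\ admits no adequate diagram at all. The standard pretzel diagram $D$ is easy to dismiss: at each crossing of the negative tangle, two arcs of the $A$-resolution (respectively $B$-resolution) share a state circle, so $D$ is neither $A$- nor $B$-adequate. Promoting this to non-adequacy of the knot requires more, and I would cite the results on non-adequacy of $3$-string pretzel knots in \cite{LeeGarVeen, LeeVeen, LYL}, which cover precisely the range $r<0<s,t$ with $-2r<s,t$. This is the main obstacle to a self-contained proof, since the adequacy obstructions one might hope to use, such as the degree-span formula, are by the very statement of the proposition satisfied by $K$ with equality; one therefore needs a finer invariant (for instance, properties of the leading or trailing coefficients of $J_K(n)$) or an appeal to the structural classification of $3$-string pretzel knots given in the cited work.
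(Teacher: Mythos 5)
Your outline for the first two thirds of the argument (explicit degree formulas for $d_{\pm}[J_K(n)]$ in this parameter range, hence constant coefficient functions and $p_K=1$; computation of $g_T$; coefficient-by-coefficient verification of Equation (\ref{eq:adequate})) matches the paper's computation in spirit, though the paper gets $d_+$ from the $B$-adequacy of the standard pretzel diagram and $d_-$ from Lee's state-surface result \cite{Lee17}, and simply quotes that non-alternating $3$-string pretzel knots have Turaev genus one rather than computing $g_T(D)$ (note that your formula for $g_T(D)$ only bounds $g_T(K)$ from above; you would still need $g_T(K)\geq 1$ from non-alternatingness to pin it down).

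The genuine gap is in your last step, and it stems from a misreading of what the degree span does and does not obstruct. You assert that ``the degree-span formula \ldots\ is by the very statement of the proposition satisfied by $K$ with equality'' and therefore cannot be used to rule out adequacy, so you fall back on citing an external non-adequacy classification. But the paper's entire point, and its actual proof of non-adequacy, is the opposite: Equation (\ref{eq:adequate}) with \emph{some} integer $c$ is indeed no obstruction, but the specific value of $c$ is. For an adequate knot the quadratic coefficient of $2d_+[J_K(n)]-2d_-[J_K(n)]$ equals $c_+(D)+c_-(D)=c(D)=c(K)$ for any adequate diagram $D$ (the degree formulas in the proof of Corollary \ref{co:adequate} together with the Lickorish--Thistlethwaite fact \cite{Li} that adequate diagrams realize the crossing number; this is also the content of the remark citing \cite{Kaind} that $c=c(K)$ in (\ref{eq:adequate}) forces adequacy). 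Here the computation gives quadratic coefficient $s+t$, while $c(K)=s+t-r>s+t$ since $r<0$; this contradiction is the whole proof that $K$ is non-adequate. No finer invariant and no appeal to a structural classification of pretzel knots is needed, and your proposed citations \cite{LeeGarVeen, LeeVeen, LYL} are results on the slope conjecture, not statements classifying which pretzel knots admit adequate diagrams, so as written that step of your argument does not close.
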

\begin{proof}

 The standard 3-string pretzel diagram $D$  of $K=P(r, s, t)$ has $s+t-r$ crossings
and by \cite{LT} this is the crossing number of $K$. That is $c(K)=c(D)=s+t-r$.
The diagram $D$ is also $B$-adequate with  $c_+(D)=c(D)=s+t-r$ and $v_B(D)=-r+1.$ Thus we have

$$2\, d_+[J_{K}(n)] = (s+t-r) n^2 + (-s-t+1)n +(r-1).$$
On the other hand,  Lee \cite{Lee17} shows  that 
\begin{itemize}
\item  the Jones slope coming from $d_+[J_{K}(n)]$
is equal to $2c_-(D)-2r=-2r$;
\item  it is realized by a Jones surface
that is actually the state surface $S_{\sigma}$ corresponding to the state $\sigma$ that assigns the $-r$ crossings the $B$-resolution and the $s+t$ crossings the $A$-resolution.
\end{itemize}
Note that the  hypothesis $-2r<s,t$ is needed for these claims.

The number of state circles for  above state $\sigma$ is given by $v_{\sigma}(D)=-r-1+s-1+t-1+2=-r+s+t-1$. We have  $$-\chi(S_{\sigma})= -(v_{\sigma}(D)-c(D))=-(-r+s+t-1-s-t+r)=1,$$ and
$$2\, d_-[J_{K}(n)] = - rn^2 + n +(r-1).$$
It follows that

$$2\, d_+[J_{K}(n)]-2\, d_-[J_{K}(n)]= (s+t)n^2 -(s+t) n.$$
The Turaev genus of non-alternating  3-string pretzel knots is known to be one and hence $2-2g_T(K)=0$.
With this observation we see that the last equation can be written in the form of Equation (\ref{eq:adequate}) where $c=s+t\in \ZZ$.
Finally  since $s+t< s+t-r=c(K)$, the knot $K$ is not adequate.
\end{proof}

\smallskip

\begin{remark} In \cite{Kaind} we show that if in Equation (\ref{eq:adequate}) we require that the constant $c$ is actually the crossing number of $K$, then $K$ must be adequate.
Proposition \ref{counter} and its proof show that the condition $c=c(K)$ is necessary.
\end{remark}

\begin{remark} The proof of Proposition \ref{counter} shows, in particular, that there are non-adequate knots $K$  that admit spanning surfaces $S,S^{*}$ such that
$$\chi(S)+\chi(S^{*})+{1\over 2} i(\partial S, \partial S^{*})=2-2g_T(K).$$
This should be compared with the main result of \cite{howie} that states that for $g_T(K)=0$ this equation characterizes alternating knots and with  \cite[Problem 1.3]{Kaind}.
\end{remark}


\bibliographystyle{plain} \bibliography{biblio}
\end{document}